\newtheorem{cor}{Corollary}[section]
\newtheorem{thm}{Theorem}[section]
\newfont{\sBlackboard}{msbm10 scaled 900}
\newcommand{\dd}     {{\rm d}}
\newcommand{\mylabel}[1]{\label{#1}
            \ifx\undefined\stillediting
            \else \fbox{$#1$}\fi }
\newcommand{\BE}{\begin{equation}}
\newcommand{\EEQ}{\end{equation}}
\newcommand{\rfb}[1]{\mbox{\rm
   (\ref{#1})}\ifx\undefined\stillediting\else:\fbox{$#1$}\fi}
\newfont{\Blackboard}{msbm10 scaled 1200}
\newcommand{\bl}[1]{\mbox{\Blackboard #1}}
\newfont{\roma}{cmr10 scaled 1200}
\def\CC{\rm \hbox{C\kern-.56em\raise.4ex
         \hbox{$\scriptscriptstyle |$}\kern+0.5 em }}
\newcommand{\ud}{\mathrm{d}}
\newcommand{\be}{\begin{equation}}
\newcommand{\ee}{\end{equation}}
\newcommand{\beq}{\begin{eqnarray}}
\newcommand{\eeq}{\end{eqnarray}}
\newcommand{\beqs}{\begin{eqnarray*}}
\newcommand{\eeqs}{\end{eqnarray*}}
\newcommand{\bt}{\begin{thm}}
\newcommand{\et}{\end{thm}}
\newcommand{\br}{\begin{remark}}
\newcommand{\er}{\end{remark}}
\newcommand{\bc}{\begin{cor}}
\newcommand{\ec}{\end{cor}}
\newcommand{\el}{\end{lem}}
\newcommand{\bd}{\begin{definition}}
\newcommand{\ed}{\end{definition}}
\newcommand{\mm}    {{\hbox{\hskip 0.5pt}}}
\newcommand{\bluff} {{\hbox{\raise 15pt \hbox{\mm}}}}
\def\section{\@startsection {section}{1}{\z@}{-3.5ex plus -1ex minus
    -.2ex}{2.3ex plus .2ex}{\large\bf}}
\def\be{\begin{equation}}
\def\ee{\end{equation}}
\def\ds{\displaystyle}
\newcommand{\im}{\mathrm{Im}}
\newcommand{\e}{\mathrm{e}}
\newcommand{\R}{\bl{R}}
\newcommand{\N}{\bl{N}}
\newcommand{\op}{\mathrm{op}}
\begin{document}

\thispagestyle{empty}
\title[Vibrating plate with singular structural damping]{Stabilization for vibrating plate with singular structural damping}
\author{Ka\"{\i}s AMMARI}
\address{Universit\'e de Monastir, Facult\'e des Sciences de Monastir, Analyse et Contr\^ole des EDP, UR 13ES64, Monastir, 5019 Monastir, Tunisia} 
\email{kais.ammari@fsm.rnu.tn} 

\author{Fathi HASSINE}
\address{Universit\'e de Monastir, Facult\'e des Sciences de Monastir, Analyse et Contr\^ole des EDP, UR 13ES64, Monastir, 5019 Monastir, Tunisia}
\email{fathi.hassine@fsm.rnu.tn}

\author{Luc ROBBIANO}
\address{Laboratoire de Math\'ematiques, Universit\'e de Versailles Saint-Quentin en Yvelines, 78035 Versailles, France}
\email{luc.robbiano@uvsq.fr}


\begin{abstract}
We consider the dynamic elasticity equation, modeled by the Euler-Bernoulli plate equation, with a locally distributed singular structural (or viscoelastic
) damping in a boundary domain. Using a frequency domain method combined, based on the Burq's result \cite{burq}, combined with an estimate of Carleman type we provide  precise decay estimate showing that the energy of the system decays logarithmically as the type goes to the infinity.
\end{abstract}

\subjclass[2010] {35A01, 35A02, 35M33, 93D20}
\keywords{Carleman estimate, stabilization, plate equation, singular structural damping}

\maketitle

\tableofcontents

\section{Introduction and main results}
\setcounter{equation}{0}
Let $\Omega \subset \R^{n}$, $n \geq 2,$ be a bounded domain with a sufficiently smooth boundary $\partial \Omega=\Gamma=\Gamma_{0}\cup\Gamma_{1}$ such that $\overline{\Gamma}_{0}\cap\overline{\Gamma}_{1}=\emptyset$. Let $\omega$ be an no empty and open subset of $\Omega$ with smooth boundary $\partial\omega=\mathcal{I}\cup\Gamma_{1}$ such that $\overline{\Gamma}_{1}\cap\overline{\mathcal{I}}=\emptyset$ and $\overline{\Gamma}_{0}\cap\overline{\mathcal{I}}=\emptyset$ and (see Figure \ref{figP1}). 

Consider the damping plate system
\begin{equation}
\label{plate1}
\partial_t^2 u + \Delta^2 u - \, \mathrm{div}(a(x) \, \nabla \partial_t u) = 0, \, \Omega \times (0,+\infty), 
\end{equation}
\begin{equation}
\label{plate2}
u = \Delta u = 0, \, \partial \Omega \times (0,+\infty),
\end{equation}
\begin{equation}
\label{plate3}
u(x,0) = u^0, \, \partial_t u(x,0) = u^1 (x), \, \Omega,
\end{equation}
where $a(x)=d\,\mathbb{1}_{\omega}(x)$ and $d>0$ is a constant. This condition ensures that the damping term is singular and effective on the set $\omega$. System \eqref{plate1}-\eqref{plate3}, involving a constructive viscoelastic damping $\mathrm{div}(a(x)\nabla u_{t})$, models the vibrations of an elastic body which has one part made of viscoelastic material. The study of the stabilization of problem involving constructive viscoelastic damping has attached a lot of attention in recent years e.g. \cite{AHR2, AN, AV, ammari-niciase, CCT,chen-liu-liu,hassine1,hassine2,hassine3,liu-liu,liu-rao1,liu-rao2,tebou,tebou1} for the case of the Kelvin-Voigt damping and \cite{DK,MDMKN,tebou2} for the case of the locally distributed structural damping. Noting that the main difference between these two kinds of damping from a mathematical point of view is that the Kelvin-Voigt damping is an operator of the same order of the leading elastic term while the structural order is  of the half of the order of the principal operator. 

The undamped plate equation with $a=0$ occurs as a linear model for vibrating stiff objects where the potential energy involves curvature-like terms which lead to the bi-Laplacian $(-\Delta)^{2}$ as the main ``elastic'' operator. (In the one-dimensional case one obtains the Euler–Bernoulli beam equation). In this model, energy dissipation is neglected and the equation has no smoothing effect as the governing semigroup is unitary on the canonical $L^{2}$-based phase space. One adds damping terms to incorporate the loss of energy. Structural damping describes a situation where higher frequencies are more strongly damped than low frequencies. Here the damping term has ``half of the order'' of the leading elastic term.

From a theoretical point of view, the resulting system can be seen as a transmission problem of mixed type: while the structurally damped plate equation is of parabolic nature, the undamped part is of dissipative nature. Below we will see that the damping is strong enough (independent of the size of the damped part) to obtain logarithm stability for the semigroup of the coupled system. The analogue result for a coupled system of plates was obtained in the study by Denk and Kammerlander \cite{DK} for clamped (Dirichlet) boundary conditions. It is shown in this work that the damping supported near the whole boundary is strong enough to produce uniform exponential decay of the energy of the coupled system. Noting as well the paper of  Denk et al. \cite{MDMKN} in which they consider a transmission problem where a structurally damped plate equation is coupled with a damped or undamped wave equation by transmission conditions. They show that exponential stability holds in the damped-damped situation and polynomial stability (but no exponential stability) holds in the damped-undamped case. However, in this work we deal with damping supported near an arbitrary small part of the boundary. So in particular here we aim to prove the logarithm stabilization of problem \eqref{plate1}-\eqref{plate3}. Our approach consists first to transform the resolvent problem respect to the semigroup operator to a transmission system, then applying a special Carleman estimate adopted to a such coupled system in order to obtain a resolvent estimate with at most exponential growth finally the Burq's result \cite{burq} we find out the decay rate of the energy.
\medskip
 
\begin{figure}[htbp]
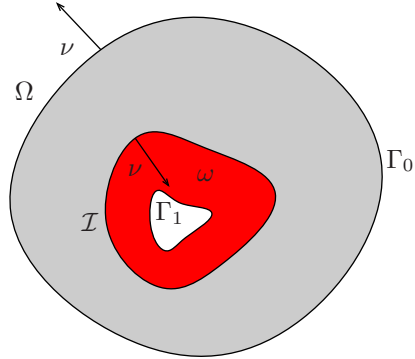

\figinit{pt}
\figpt 0:(10,-15)
\figpt 1:(-50,-10)
\figpt 2:(0,-60)
\figpt 3:(60,-50)
\figpt 4:(90,10)
\figptsym 5:=3/1,4/
\figptsym 6:=2/1,4/
\figpthom 7:=0/1,0.6/
\figpthom 8:=0/2,0.5/
\figpthom 9:=0/3,0.6/
\figpthom 10:=0/4,0.5/
\figpthom 11:=0/5,0.6/
\figpthom 12:=0/6,0.5/
\figpthom 13:=0/1,-0.5/
\figpthom 14:=0/2,-0.5/
\figpthom 15:=0/3,-0.5/
\figpthom 16:=0/4,-0.5/
\figpthom 17:=0/5,-0.5/
\figpthom 18:=0/6,-0.5/
\figpt 19:(10,30)
\figpt 20:(40,-5)
\figpt 21:(5,10)
\figpt 22:(15,-20)
\figpt 23:(45,20)
\figpt 24:(-10,-5)
\figpt 31:(-45,40)
\figpt 32:(-33,70)
\figpt 33:(10,0)
\figpthom 34:=0/7,0.7/
\figpthom 35:=0/8,0.6/
\figpthom 36:=0/9,0.7/
\figpthom 37:=0/10,0.6/
\figpthom 38:=0/11,0.7/
\figpthom 39:=0/12,0.6/
\psbeginfig{}
\psset(width=1)
\psset(fillmode=yes,color=0.8)
\pscurve[1,2,3,4,5,6,1,2,3]
\psset(fillmode=yes,color=2\Redrgb)
\pscurve[7,8,9,10,11,12,7,8,9]
\psset(fillmode=yes,color=\Whitergb)
\pscurve[34,35,36,37,38,39,34,35,36]
\psset(width=0.5)
\psset(color=\Blackrgb)
\psset(fillmode=no)
\pscurve[1,2,3,4,5,6,1,2,3]
\pscurve[7,8,9,10,11,12,7,8,9]
\pscurve[34,35,36,37,38,39,34,35,36]
\psset arrowhead(ratio=0.15)
\psarrow[6,32]
\psarrow[12,33]
\psendfig
\figvisu{\figBoxA}{}{
\figwrites 11:$\omega$(8)
\figwrites 31:$\Omega$(0)
\figwritew 7:$\mathcal{I}$(3)
\figwritee 4:$\Gamma_{0}$(2)
\figwritew 6:$\nu$(10)
\figwrites 12:$\nu$(10)
\figwriten 0:$\Gamma_{1}$(1)
}
\centerline{\box\figBoxA}
\caption{The domain $\Omega$.}
\label{figP1}
\end{figure} 
\medskip
We define the natural energy of $u$ solution of \rfb{plate1}-\rfb{plate3} at instant $t$ by
\begin{equation*}
E(u,t)=
\frac{1}{2} \left(\int_{\Omega}|\partial_{t}u(t,x)|^{2}\,\ud x+\int_{\Omega}|\Delta u(t,x)|^{2}\,\ud x\right), \, \forall \, t \geq 0.
\end{equation*}
Simple formal calculations gives
\begin{equation*}
E(u,0)-E(u,t)= - \, d \,\int_{0}^{t} \int_{\omega}\left|\nabla\partial_{t} u(x,s)\right|^2 \,\ud x\,\ud s,\forall t\geq 0, 
\end{equation*}
and therefore, the energy is non-increasing function of the time variable $t$.

\medskip

\begin{thm}\label{LogStab}
For any $k\in\N^*$ there exists $C>0$ such that for any initial data $(u^{0},u^{1})\in\mathcal{D}(\mathcal{A}^{k})$ the solution $u(x,t)$ of \eqref{plate1} starting from $(u^{0},u^{1})$ satisfying
\be
\label{MR}
E(u,t)\leq\frac{C}{(\ln(2+t))^{2k}}\|(u^{0},u^{1})\|_{\mathcal{D}(\mathcal{A}^{k})}^{2},\quad\forall\,t>0,
\ee
where $(\mathcal{A}, \mathcal{D}(\mathcal{A}))$ is defined in Section \ref{wellposed}.
\end{thm}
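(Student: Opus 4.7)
The plan is to reduce the theorem to a high-frequency resolvent estimate and then apply Burq's logarithmic-decay theorem from \cite{burq}. Concretely, it suffices to prove that $i\mathbb{R}\subset\rho(\mathcal{A})$ and that for some $C_1,C_2>0$,
\[
\|(i\lambda - \mathcal{A})^{-1}\|_{\mathcal{L}(\mathcal{H})} \leq C_1 e^{C_2|\lambda|} \qquad (|\lambda| \geq 1);
\]
Burq's theorem then gives $\|e^{t\mathcal{A}}U_0\|_{\mathcal{H}} \leq C(\ln(2+t))^{-k}\|U_0\|_{\mathcal{D}(\mathcal{A}^k)}$, and since the energy is quadratic in the orbit, squaring yields \eqref{MR} with the exponent $2k$.

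Setting $v = i\lambda u - f$ in the resolvent equation $(i\lambda - \mathcal{A})(u,v) = (f,g)$ reduces everything to a scalar equation for $u$:
\[
\Delta^2 u - \lambda^2 u - i\lambda\,\mathrm{div}(a(x)\nabla u) = G,
\]
with hinged boundary conditions $u = \Delta u = 0$ on $\partial\Omega$. Writing $u_1 = u|_{\Omega\setminus\overline{\omega}}$ and $u_2 = u|_{\omega}$, this becomes a transmission system: $u_1$ solves the undamped resolvent $(\Delta-\lambda)(\Delta+\lambda)u_1 = G_1$, while $u_2$ solves the damped problem $\Delta^2 u_2 - i\lambda d\Delta u_2 - \lambda^2 u_2 = G_2$, whose symbol factorizes as $(\Delta - \mu_+)(\Delta - \mu_-)$ with $\mu_\pm$ the roots of $\mu^2 - i\lambda d\mu - \lambda^2 = 0$, both of size $|\lambda|$. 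Across the interface $\mathcal{I}$, the weak formulation together with the regularity of $u$ imposes four transmission conditions: continuity of $u$, of $\partial_\nu u$, of $\Delta u$, and of the conormal derivative $\partial_\nu\Delta u - i\lambda a\,\partial_\nu u$.

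The heart of the proof is a Carleman estimate, with $|\lambda|$ playing the role of a large semiclassical parameter, adapted to this transmission system and yielding control of $u$ in $L^2(\Omega)$ by the right-hand side up to a factor of $e^{C|\lambda|}$. On the undamped side, a Carleman inequality for the elliptic operator $\Delta^2 - \lambda^2$ is available from standard theory. On the damped side, the parameter-dependent factorization through $\Delta - \mu_\pm$ allows one to chain two second-order Carleman estimates, but the effective viscosities $\mu_\pm$ scale with $\lambda$ and must be tracked carefully. The main obstacle — and the step I expect to absorb most of the work — is to construct a single weight $\varphi$ that is H\"ormander-pseudoconvex for both second-order factors on both sides, and for which the interface integrals produced by integrating by parts twice against $e^{2\tau\varphi}$-weighted test functions either have favorable signs or can be absorbed via the four transmission conditions above. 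Once such an estimate is in hand, a standard contradiction-and-quasimode argument delivers the sought resolvent bound, and Burq's theorem then yields \eqref{MR}.
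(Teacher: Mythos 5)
Your overall strategy coincides with the paper's: reduce Theorem~\ref{LogStab} via Burq's theorem (with the Batty--Duyckaerts refinement for general $k$) to the bound $\|(i\mu-\mathcal{A})^{-1}\|\leq Ce^{C|\mu|}$, eliminate $v$, split $\Omega$ into $\omega$ and $\Omega\setminus\overline{\omega}$ to get the transmission system \eqref{Splate5}--\eqref{Splate7}, factor the fourth-order operators into second-order ones, and chain Carleman estimates. Two remarks on the factorization: the paper does \emph{not} use the exact roots $\mu_{\pm}$ of $\mu^{2}-i\lambda d\mu-\lambda^{2}=0$; it sets $w_{1}=\Delta u_{1}+(|\mu|-id\mu)u_{1}$ and $w_{2}=\Delta u_{2}+|\mu|u_{2}$ so that \emph{both} outer factors become the same real operator $-\Delta+|\mu|$, which matches verbatim the hypotheses of the ready-made transmission Carleman estimate of \cite{AHR2} (Theorem~\ref{Pcar5}); the price is a remainder $-id|\mu|\mu u_{1}$ of size $|\mu|^{2}u_{1}$ dumped into $\Phi_{1}$, plus interface jumps $\phi_{1}=-id\mu u_{1}$, $\phi_{2}=d\partial_{\nu}f_{1}$. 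Your exact factorization is cleaner algebraically but obliges you to re-verify the sub-ellipticity condition \eqref{Pcar49} for $-\Delta-\mu_{\pm}$ with \emph{complex} spectral parameter of modulus $|\lambda|$, and you would still need the two-weight/cut-off device (the Lebeau--Robbiano/Burq trick, used twice here with $B_{r}$ and $\widetilde{B}_{r}$) to remove the critical points of the weights; a single pseudoconvex weight with the required sign conditions on all boundary components does not exist.

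The genuine gap is in your final assembly: you assert that the Carleman estimate ``yields control of $u$ in $L^{2}(\Omega)$ by the right-hand side up to $e^{C|\lambda|}$,'' but it cannot, because the remainder $-id|\mu|\mu u_{1}$ (or, in your exact-factorization variant, the interface jumps of $w$ and $\partial_{\nu}w$, which are proportional to $\lambda u$ and $\lambda\partial_{\nu}u$ on $\mathcal{I}$) reinjects $\|u_{1}\|_{H^{1}(\omega)}$ on the right-hand side. What the Carleman machinery actually delivers is \eqref{Splate23}, namely $\|\Delta u\|_{L^{2}(\Omega)}^{2}\leq Ce^{C|\mu|}\bigl(\|\Delta f\|^{2}+\|g\|^{2}+\|\nabla u\|_{L^{2}(\omega)}^{2}\bigr)$, i.e.\ an observability-type inequality with the damping region as observation set. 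The loop is closed only by the dissipation identity \eqref{Splate4}, obtained by pairing the equation with $\overline{u}$ and taking imaginary parts, which bounds $|\mu|\,\|\nabla u\|_{L^{2}(\omega)}^{2}$ by $(\text{data})\times\|u\|_{H^{1}}$ and allows the local term to be absorbed. This is the one step where the dissipativity of the system actually enters the resolvent estimate, and it is absent from your plan. (Relatedly, a contradiction/quasimode formulation is unnecessary here: once the observation term is absorbed, the bound $Ce^{C|\mu|}$ is read off directly.)
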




\medskip

This paper is organized as follows. In Section \ref{wellposed}, we give the proper functional setting for systems \rfb{plate1}-\rfb{plate3}, then we prove that this system is well-posed and strong stability of the semigroup. In Section \ref{stab}, we study the stabilization for  \rfb{plate1}-\rfb{plate3} by resolvent method and give the explicit decay rate of the energy of the solutions of \rfb{plate1}-\rfb{plate3}.
\section{Well-posedness and strong stability}\label{wellposed}
We define the energy space by $\mathcal{H}= H^2(\Omega) \cap H_{0}^{1}(\Omega)\times L^{2}(\Omega)$ which is endowed with the usual inner product
$$
\left\langle(u_{1},v_{1});(u_{2},v_{2})\right\rangle=\int_{\Omega}\Delta u_{1}(x).\Delta \overline{u}_{2}(x)\,\dd x + \int_{\Omega} v_{1}(x)\overline{v}_{2}(x)\,\dd x.
$$
We next define the linear unbounded operator $\mathcal{A}:\mathcal{D}(\mathcal{A})\subset\mathcal{H}\longrightarrow\mathcal{H}$ by
$$
\mathcal{D}(\mathcal{A})=\{(u,v)\in\mathcal{H}:\;v\in H^2(\Omega) \cap H^1_0(\Omega) ,\;\Delta^2 u-\mathrm{div}(a\nabla v)\in L^{2}(\Omega),\;\Delta u_{|\partial \Omega} = 0 \}
$$
and
$$
\mathcal{A}(u,v)^{t}=(v,-\Delta^2 u+\mathrm{div}(a\nabla v))^{t}
$$
Then, putting $v=\partial_{t} u$, we can write \eqref{plate1}-\eqref{plate3} into the following Cauchy problem
$$
\frac{d}{dt}(u(t),v(t))^{t}=\mathcal{A}(u(t),v(t))^{t},\;(u(0),v(0))=(u^{0}(x),u^{1}(x)).
$$
\begin{thm}
The operator $\mathcal{A}$ generates a $C_{0}$-semigroup of contractions on the energy space $\mathcal{H}$.
\end{thm}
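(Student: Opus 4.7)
\begin{pr}[Proof proposal]
The plan is to invoke the Lumer--Phillips theorem: I will verify that $\mathcal{D}(\mathcal{A})$ is dense in $\mathcal{H}$, that $\mathcal{A}$ is dissipative, and that $\mathrm{Id}-\mathcal{A}$ has dense range (actually is surjective). Density of the domain is standard since $\mathcal{D}(\mathcal{A})$ contains $(H^{4}(\Omega)\cap H^{2}_{0}(\Omega))\times(H^{2}(\Omega)\cap H^{1}_{0}(\Omega))$, which is dense in $\mathcal{H}$.

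For dissipativity, for $(u,v)\in\mathcal{D}(\mathcal{A})$ I compute
\[
\langle\mathcal{A}(u,v),(u,v)\rangle_{\mathcal{H}}=\int_{\Omega}\Delta v\,\overline{\Delta u}\,\dd x+\int_{\Omega}\bigl(-\Delta^{2}u+\mathrm{div}(a\nabla v)\bigr)\overline{v}\,\dd x.
\]
Two successive integrations by parts, using $v\in H^{1}_{0}(\Omega)$ and the built-in condition $\Delta u=0$ on $\partial\Omega$, turn $-\int_{\Omega}\Delta^{2}u\,\overline{v}$ into $-\int_{\Omega}\Delta u\,\overline{\Delta v}$, and the last term becomes $-\int_{\Omega}a|\nabla v|^{2}\,\dd x=-d\int_{\omega}|\nabla v|^{2}\,\dd x$. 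The first two contributions combine into a purely imaginary quantity, so
\[
\mathrm{Re}\langle\mathcal{A}(u,v),(u,v)\rangle_{\mathcal{H}}=-d\int_{\omega}|\nabla v|^{2}\,\dd x\le 0,
\]
which proves dissipativity (and is consistent with the formal energy identity noted before the theorem).

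For the range condition, given $(f,g)\in\mathcal{H}$ I seek $(u,v)\in\mathcal{D}(\mathcal{A})$ with $(\mathrm{Id}-\mathcal{A})(u,v)=(f,g)$. Writing $v=u-f$ this reduces to the elliptic equation
\[
u+\Delta^{2}u-\mathrm{div}(a\nabla u)=g+f-\mathrm{div}(a\nabla f)
\]
to be interpreted variationally on $V:=H^{2}(\Omega)\cap H^{1}_{0}(\Omega)$. The associated bilinear form
\[
B(u,\varphi)=\int_{\Omega}u\overline{\varphi}\,\dd x+\int_{\Omega}\Delta u\,\overline{\Delta\varphi}\,\dd x+d\int_{\omega}\nabla u\cdot\nabla\overline{\varphi}\,\dd x
\]
is continuous and coercive on $V$ (coercivity uses that $\|\Delta\cdot\|_{L^{2}}$ is equivalent to the $H^{2}$-norm on $V$), while
$L(\varphi)=\int_{\Omega}(f+g)\overline{\varphi}\,\dd x+d\int_{\omega}\nabla f\cdot\nabla\overline{\varphi}\,\dd x$
is continuous on $V$ since $f\in V$. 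Lax--Milgram yields a unique $u\in V$, then $v:=u-f\in V$ automatically lies in $H^{2}(\Omega)\cap H^{1}_{0}(\Omega)$.

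It remains to check that $(u,v)\in\mathcal{D}(\mathcal{A})$, i.e.\ that $\Delta^{2}u-\mathrm{div}(a\nabla v)\in L^{2}(\Omega)$ and $\Delta u=0$ on $\partial\Omega$. Testing the variational identity against $\varphi\in\mathcal{C}^{\infty}_{c}(\Omega)$ gives $\Delta^{2}u-\mathrm{div}(a\nabla v)=g-v\in L^{2}(\Omega)$ in $\mathcal{D}'(\Omega)$. The hidden boundary condition $\Delta u_{|\partial\Omega}=0$ is the natural one produced by the variational formulation: integrating $\int_{\Omega}\Delta u\,\overline{\Delta\varphi}$ by parts twice, using $\varphi_{|\partial\Omega}=0$ and the interior equation, leaves precisely $\int_{\partial\Omega}\Delta u\,\partial_{\nu}\overline{\varphi}\,\dd\sigma=0$ for all $\varphi\in V$, whence $\Delta u_{|\partial\Omega}=0$ by surjectivity of the trace $\varphi\mapsto\partial_{\nu}\varphi_{|\partial\Omega}$ on $V$. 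This is the only non-routine point: it requires a minimum amount of elliptic regularity near $\partial\Omega$ to make the trace $\Delta u_{|\partial\Omega}$ meaningful, which is obtained away from $\overline{\omega}$ since there the equation reduces to $u+\Delta^{2}u=f+g$ with $L^{2}$ right-hand side. With this, $(u,v)\in\mathcal{D}(\mathcal{A})$, and the Lumer--Phillips theorem yields that $\mathcal{A}$ generates a contraction $C_{0}$-semigroup on $\mathcal{H}$.
\end{pr}
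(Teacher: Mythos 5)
Your overall strategy (Lumer--Phillips: dissipativity plus a range condition plus density) is the same as the paper's, and two of your three steps are sound. The dissipativity computation is correct and is exactly what the paper asserts. For the range condition you solve $(\mathrm{Id}-\mathcal{A})(u,v)=(f,g)$ directly by Lax--Milgram with the coercive form $B$, whereas the paper first shows $0\in\rho(\mathcal{A})$ (solving $\mathcal{A}(u,v)=(f,g)$, where $v=f$ is immediate) and then gets $R(\lambda I-\mathcal{A})=\mathcal{H}$ for small $\lambda>0$ by perturbation; both routes work, and yours is arguably more self-contained. One small imprecision there: to give meaning to the trace $\Delta u_{|\partial\Omega}$ you invoke elliptic regularity ``away from $\overline{\omega}$'', which covers $\Gamma_{0}$ but not $\Gamma_{1}$; near $\Gamma_{1}$ you are \emph{inside} $\omega$, where $a\equiv d$ and the equation $u+\Delta^{2}u-d\Delta u\in L^{2}$ gives the needed local $H^{4}$ regularity, so the fix is immediate.

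The genuine gap is the density step. The subspace $\bigl(H^{4}(\Omega)\cap H^{2}_{0}(\Omega)\bigr)\times\bigl(H^{2}(\Omega)\cap H^{1}_{0}(\Omega)\bigr)$ is \emph{not} contained in $\mathcal{D}(\mathcal{A})$, for two independent reasons: (i) since $a=d\,\mathbb{1}_{\omega}$ is discontinuous across $\mathcal{I}$, for a generic $v\in H^{2}(\Omega)\cap H^{1}_{0}(\Omega)$ one has $\mathrm{div}(a\nabla v)=d\,\mathbb{1}_{\omega}\Delta v-d\,(\partial_{\nu}v)\,\delta_{\mathcal{I}}$ in $\mathcal{D}'(\Omega)$, which is not in $L^{2}(\Omega)$ unless $\partial_{\nu}v$ vanishes on $\mathcal{I}$ (the domain condition $\Delta^{2}u-\mathrm{div}(a\nabla v)\in L^{2}$ couples $u$ and $v$ through a cancellation on $\mathcal{I}$ and cannot be checked factorwise); and (ii) $u\in H^{2}_{0}(\Omega)$ forces $\partial_{\nu}u=0$ but not $\Delta u=0$ on $\partial\Omega$. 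Worse, even if it were contained, this subspace is not dense in $\mathcal{H}$: its closure in the first factor (for the norm $\|\Delta\cdot\|_{L^{2}}$, equivalent to the $H^{2}$-norm) stays inside the proper closed subspace $H^{2}_{0}(\Omega)$ of $H^{2}(\Omega)\cap H^{1}_{0}(\Omega)$. The clean repair is the one the paper uses: once $\mathcal{A}$ is dissipative and $\lambda I-\mathcal{A}$ is surjective for some $\lambda>0$ on the Hilbert space $\mathcal{H}$, density of $\mathcal{D}(\mathcal{A})$ is automatic (\cite[Theorem 1.4.6]{Pazy}), so density should be a \emph{consequence} of your other two steps rather than a separate claim.
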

\begin{proof}
Firstly, it is easy to see that for all $(u,v)\in\mathcal{D}(\mathcal{A})$, we have
$$
\mathrm{Re}\left\langle\mathcal{A}(u,v);(u,v)\right\rangle=-\int_{\Omega}a|\nabla v(x)|^{2}\,\dd x,
$$
which show that the operator $\mathcal{A}$ is dissipative.

Next, for any given $(f,g)\in\mathcal{H}$, we solve the equation $\mathcal{A}(u,v)=(f,g)$, which is recast on the following way
\begin{equation}\label{WPplate}
\left\{\begin{array}{l}
v=f,
\\
-\Delta^{2} u+\mathrm{div}(a\nabla f)=g.
\end{array}\right.
\end{equation}
It is well known that by Lax-Milgram's theorem the system \eqref{WPplate} admits a unique solution $u\in H^{2}(\Omega)\cap H_{0}^{1}(\Omega)$. Moreover by multiplying the second line of \eqref{WPplate} by $\overline{u}$ and integrating over $\Omega$ and using Cauchy-Schwarz inequality we find that there exists a constant $C>0$ such that
$$
\int_{\Omega}|\Delta u(x)|^{2}\,\dd x\leq C\left(\int_{\Omega}|\Delta f(x)|^{2}\,\ud x+\int_{\Omega}|g(x)|^{2}\,\ud x\right).
$$
It follows that for all $(u,v)\in\mathcal{D}(\mathcal{A})$ we have
$$
\|(u,v)\|_{\mathcal{H}}\leq C\|(f,g)\|_{\mathcal{H}}.
$$
This imply that $0\in\rho(\mathcal{A})$ and by contraction principle, we easily get $R(\lambda\mathrm{I}-\mathcal{A})=\mathcal{H}$ for sufficient small $\lambda>0$. The density of the domain of $\mathcal{A}$ follows from \cite[Theorem 1.4.6]{Pazy}. Then thanks to Lumer-Phillips Theorem (see \cite[Theorem 1.4.3]{Pazy}), the operator $\mathcal{A}$ generates a $C_{0}$-semigroup of contractions on the Hilbert $\mathcal{H}$. 
\end{proof}
\begin{thm}
The semigroup $e^{t\mathcal{A}}$ is strongly stable in the energy space $\mathcal{H}$, i.e,
$$
\lim_{t\to+\infty}\|e^{t\mathcal{A}}(u_{0},v_{0})^{t}\|_{\mathcal{H}}=0,\;\forall\,(u_{0},v_{0})\in\mathcal{H}.
$$ 
\end{thm}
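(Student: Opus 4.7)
The plan is to invoke the Arendt--Batty theorem: a bounded $C_{0}$-semigroup on a reflexive space is strongly stable provided that $\sigma(\mathcal{A}) \cap i\R$ is countable and contains no eigenvalue. Since the previous result shows that $e^{t\mathcal{A}}$ is contractive (hence bounded), I would actually establish the stronger statement $\sigma(\mathcal{A}) \cap i\R = \emptyset$.

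The first step is to check that $\mathcal{A}$ has compact resolvent. Since $0 \in \rho(\mathcal{A})$ is already available, given $(f,g) \in \mathcal{H}$ the pair $(u,v) = \mathcal{A}^{-1}(f,g)$ satisfies $v = f \in H^{2}(\Omega)\cap H^{1}_{0}(\Omega)$ and
\[
\Delta^{2}u = \mathrm{div}(a\nabla f) - g \in H^{-1}(\Omega), \qquad u = \Delta u = 0 \text{ on } \partial\Omega.
\]
Splitting this Navier problem into two successive Poisson problems (first solve $\Delta w = \mathrm{div}(a\nabla f) - g$ with $w \in H^{1}_{0}(\Omega)$, then $\Delta u = w$ with $u \in H^{1}_{0}(\Omega)$), standard elliptic regularity places $u$ in $H^{3}(\Omega) \cap H^{1}_{0}(\Omega)$ with a uniform bound in the graph norm. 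Hence $\mathcal{A}^{-1}$ maps $\mathcal{H}$ into a space compactly embedded in $\mathcal{H}$, so $\sigma(\mathcal{A})$ reduces to isolated eigenvalues of finite multiplicity, and it suffices to exclude eigenvalues on the imaginary axis.

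Suppose therefore that $\mathcal{A}(u,v) = i\beta(u,v)$ with $\beta \in \R$ and $(u,v) \in \mathcal{D}(\mathcal{A})$. The dissipation identity already computed gives
\[
0 = \re\langle \mathcal{A}(u,v);(u,v)\rangle = -d\int_{\omega}|\nabla v|^{2}\,\ud x,
\]
so $\nabla v \equiv 0$ in $\omega$. The case $\beta = 0$ is trivial: it forces $v = 0$ and the biharmonic problem then yields $u = 0$. For $\beta \neq 0$, using $v = i\beta u$ I infer $\nabla u \equiv 0$ in $\omega$; since $u \in H^{1}_{0}(\Omega)$ has vanishing trace on $\Gamma_{1} \subset \partial\omega \cap \partial\Omega$, the constant value of $u$ on each component of $\omega$ touching $\Gamma_{1}$ must be $0$, whence $u \equiv 0$ in $\omega$ (and consequently $v \equiv 0$ there, so $\mathrm{div}(a\nabla v) = 0$ in $\Omega$). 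The eigenvalue equation then reads $(\Delta^{2} - \beta^{2})u = 0$ throughout $\Omega$, with $u$ vanishing on the open set $\omega$.

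The main obstacle is this final unique continuation step. I would exploit the factorization $\Delta^{2} - \beta^{2} = (\Delta - \beta)(\Delta + \beta)$ into commuting constant-coefficient elliptic operators: setting $w := (\Delta + \beta)u \in L^{2}(\Omega)$, one has $(\Delta - \beta)w = 0$ in $\Omega$ with $w \equiv 0$ on $\omega$, so Aronszajn's unique continuation principle for second-order elliptic operators gives $w \equiv 0$ in $\Omega$; applying the same principle once more to $(\Delta + \beta)u = 0$ with $u \equiv 0$ on $\omega$ produces $u \equiv 0$ in $\Omega$. Hence no eigenvalue of $\mathcal{A}$ lies on $i\R$; combined with Step~1, $\sigma(\mathcal{A}) \cap i\R = \emptyset$, and the Arendt--Batty theorem delivers the announced strong stability.
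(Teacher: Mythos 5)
Your proof is correct in substance but follows a genuinely different route from the paper's, and it collides head-on with one explicit assertion there. The paper states that the resolvent of $\mathcal{A}$ is \emph{not} compact (citing the Kelvin--Voigt literature) and therefore does not reduce matters to eigenvalue exclusion alone: it proves injectivity of $i\mu I-\mathcal{A}$ by unique continuation and then establishes surjectivity separately, via the Fredholm alternative applied to $I-\mu^{2}(I+i\mu S)^{-1}A^{-1}$, which uses only the compactness of $A^{-1}=(\Delta^{2})^{-1}$. You instead claim that $\mathcal{A}^{-1}$ itself is compact, which makes surjectivity automatic. Your regularity computation does support this: because the damping $\mathrm{div}(a\nabla\,\cdot)$ has order two while the elastic operator has order four, the relation $\Delta^{2}u=\mathrm{div}(a\nabla f)-g\in H^{-1}(\Omega)$ yields $u\in H^{3}(\Omega)$ despite the jump of $a$ across $\mathcal{I}$, and the second component $v=f\in H^{2}(\Omega)\cap H^{1}_{0}(\Omega)$ sits compactly in $L^{2}(\Omega)$; this is precisely where the structurally damped plate differs from the Kelvin--Voigt models of the paper's citations, where the damping has the same order as the principal part and no derivative is gained. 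So your shortcut appears legitimate, but you must at least acknowledge that it contradicts the paper's remark; if you prefer not to rely on it, you would need to reinstate the paper's Fredholm argument for surjectivity. The unique-continuation cores of the two proofs are essentially the same --- both factor $\Delta^{2}-\mu^{2}$ through two Helmholtz-type operators and apply unique continuation twice --- except that the paper only extracts $\nabla u=0$ on $\omega$ and must differentiate the equations before applying unique continuation, whereas you upgrade to $u\equiv 0$ on $\omega$ via the trace on $\Gamma_{1}\subset\partial\omega$; that upgrade is valid here because every component of $\omega$ meets $\Gamma_{1}$, but it is the one geometric fact your argument uses that the paper's does not. Finally, the Arendt--Batty hypothesis concerns the point spectrum of the adjoint on $i\mathbb{R}$, not of $\mathcal{A}$; since you actually prove $\sigma(\mathcal{A})\cap i\mathbb{R}=\emptyset$, this imprecision is harmless.
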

\begin{proof}
To show that the semigroup $(e^{t\mathcal{A}})_{t\geq 0}$ is strongly stable we only have to prove that the intersection of $\sigma(\mathcal{A})$ with $i\mathbb{R}$ is an empty set. Since the resolvent of the operator $\mathcal{A}$ is not compact (see \cite{liu-liu, liu-rao2}) but $0\in\rho(\mathcal{A})$ we only need to prove that $(i\mu I-\mathcal{A})$ is a one-to-one correspondence in the energy space $\mathcal{H}$ for all $\mu\in\mathbb{R}^{*}$. 

i) Let $(u,v)\in\mathcal{D}(\mathcal{A})$ such that 
\begin{equation}\label{Iplate}
\mathcal{A}(u,v)^{t}=i\mu(u,v)^{t}.
\end{equation}
Then taking the real part of the scalar product of \eqref{Iplate} with $(u,v)$ we get
$$
\mathrm{Re}(i\mu\|(u,v)\|_{\mathcal{H}}^{2})=\mathrm{Re}\left\langle\mathcal{A}(u,v),(u,v)\right\rangle=-\int_{\Omega}a(x)|\nabla v|^{2}\dd x=0.
$$
which implies that
\begin{equation}\label{Dplate}
\nabla v=0 \quad \text{ in }\,\omega.
\end{equation}
Inserting \eqref{Dplate} into \eqref{Iplate}, we obtain
\begin{equation}\label{plateI1}
\left\{\begin{array}{ll}
-\mu^{2}u+\Delta^{2} u=0&\text{in }\Omega,
\\
\nabla u=0&\text{in }\omega
\\
u=\Delta u=0&\text{on }\Gamma.
\end{array}\right.
\end{equation}
We set $w=\Delta u-|\mu|u$ then from \eqref{plateI1} one follows
\begin{equation}\label{plateI2}
\Delta w+|\mu|w=0\quad\text{ in }\,\Omega.
\end{equation}
We denote by $w_{j}=\partial_{x_{j}}w$ and we derive \eqref{plateI2} and the second equation of \eqref{plateI1}, one gets
\begin{equation*}
\left\{\begin{array}{ll}
\Delta w_{j}+|\mu|w_{j}=0&\text{in }\Omega,
\\
w_{j}=0&\text{in }\omega.
\end{array}\right.
\end{equation*}

Hence, from the unique continuation theorem we deduce that $w_{j}=0$ in $\Omega$ and therefore $u_{j}=\partial_{x_{j}}u$ satisfies to the following equation
\begin{equation*}
\Delta u_{j}-|\mu|u_{j}=0\quad\text{ in }\,\Omega.
\end{equation*}
Since $u_{j}\equiv 0$ in $\omega$ once again the unique continuation theorem implies that $u_{j}\equiv 0$ in $\Omega$. Hence, $u$ is constant in $\Omega$ then from the boundary condition $u_{|\Gamma}=0$ we follow that $u\equiv 0$ in $\Omega$.  We have thus proved that $\mathrm{Ker}(i\mu I-\mathcal{A})=0$.
	
ii) Now given $(f,g)\in\mathcal{H}$, we solve the equation 
$$
(\mathcal{A}-i\mu I)(u,v)=(f,g)
$$
Or equivalently,
\begin{equation}\label{Splate}
\left\{\begin{array}{ll}
v=f+i\mu u&\text{in }\Omega
\\
-\Delta^{2}u+i\mu\mathrm{div}(a\nabla u)+\mu^{2}u=g+i\mu f-\mathrm{div}(a\nabla f)&\text{in }\Omega.
\end{array}\right.
\end{equation}
Let's define the operator
$$
\begin{array}{rrll}
A:&\mathcal{D}(A)&\longrightarrow&L^{2}(\Omega)
\\
&u&\longmapsto&\Delta^{2}u
\end{array}
$$
where $\mathcal{D}(A)=\{u\in H^{4}(\Omega): u_{|\Gamma}=\Delta u_{|\Gamma}=0\}$.
It is well known that $A$ a defined positive and self adjoint operator. The square root of the operator $A$ is given by
$$
\begin{array}{rrll}
A^{\frac{1}{2}}:&H^{2}(\Omega)\cap H^{1}_{0}(\Omega)&\longrightarrow&L^{2}(\Omega)
\\
&u&\longmapsto&-\Delta u.
\end{array}
$$
We define the bounded operator $Su=-A^{-1}(\mathrm{div}(a\nabla u))$ in $H^{2}(\Omega)\cap H^{1}_{0}(\Omega)$ and since $S$ is a self-adjoint operator then we have $0\in\rho(I+i\mu S)$.
\\
On the one hand, the second line of \eqref{Splate} can be written as follow
\begin{equation}\label{Eqplate}
(I+i\mu S)u-\mu^{2}A^{-1}u=-A^{-1}\left[g+i\mu f-\mathrm{div}(a\nabla f)\right].
\end{equation}
Let $u\in\mathrm{Ker}(I-\mu^{2}(I+i\mu S)^{-1}A^{-1})$, then $\mu^{2}u-A(I+i\mu S)u=0$ and it is clear that $u\in\mathcal{D}(A)$. It follows that
\begin{equation}\label{Aplate}
\mu^{2}u-\Delta^{2}u+i\mu\mathrm{div}(a\nabla u)=0.
\end{equation}
Multiplying \eqref{Aplate} by $\overline{u}$ and integrating over $\Omega$, then by Green's formula we obtain 
$$
\mu^{2}\int_{\Omega}|u(x)|^{2}\,\dd x-\int_{\Omega}|\Delta u(x)|^{2}\,\dd x-id\mu\int_{\omega}|\nabla u(x)|^{2}\,\dd x=0.
$$
By taking its imaginary part it follows 
$$
d\int_{\omega}|\nabla u(x)|^{2}\,\dd x=0,
$$
and this implies that $\nabla u=0$ in $\omega$. Inserting this last equation into~\eqref{Aplate} we get
$$
\mu^{2}u-\Delta^{2} u=0,\qquad \text{in }\Omega.
$$
Following the steps of the first part of this proof we can prove that $u=0$ and this imply that $\mathrm{Ker}((I-\mu^{2}(I+i\mu S)^{-1}A^{-1})=\{0\}$.
\\
On the other hand, the compactness of the injection $H^{2}(\Omega)\cap H^{1}_{0}(\Omega)\hookrightarrow L^{2}(\Omega)$ implies the compactness of the operator $A^{-\frac{1}{2}}$ and consequently the compactness of the operator $A^{-1}$ as well. Therefore thanks to Fredholm's alternative, the operator $(I-\mu^{2}(I+i\mu S)^{-1}A^{-1})$ is bijective in $L^{2}(\Omega)$. Then by setting
$$
\Lambda u=\mu^{2}A^{-1}u-(I+i\mu S)u=(I+i\mu S)(\mu^{2}(I+i\mu S)^{-1}A^{-1}-I)u.
$$ 
we deduce that $\Lambda$ is a bijection in $H^{2}(\Omega)\cap H^{1}_{0}(\Omega)$. It is not difficult to see that equation of \eqref{Eqplate} is equivalent to the following equation
$$
\Lambda u=A^{-1}(g+i\mu f-\mathrm{div}(a\nabla f)).
$$
So that, equation \eqref{Eqplate} have a unique solution in $H^{2}(\Omega)\cap H^{1}_{0}(\Omega)$ and it is clear that $u\in\mathcal{D}(A)$. This prove that the operator $(i\mu I-\mathcal{A})$ is surjective in the energy space $\mathcal{H}$. 
\\
The proof is thus complete.
\end{proof}
\section{Stabilization result}\label{stab}
In this section, we will prove the logarithmic stability of the system \eqref{plate1}. To this end, we establish a particular resolvent estimate precisely we will show that for some constant $C>0$ we have
\begin{equation}\label{Splate24}
\|(\mathcal{A}-i\mu\,I)^{-1}\|_{\mathcal{L}(\mathcal{H})}\leq C\e^{C|\mu|},\qquad \forall\,|\mu|\gg 1,
\end{equation}
and then by Burq's result \cite{burq} and  the remark of Duyckaerts \cite[section 7]{Duyckaerts} (see also \cite{batty}) we obtain the expected decay rate of the energy.
Let $\mu$ be a real number such that $|\mu|$ is large, and assume that
\begin{equation}\label{Splate1}
(\mathcal{A}-i\mu\,I)(u,v)^{t}=(f,g)^{t},\quad (u,v)\in\mathcal{D}(\mathcal{A}),\quad (f,g)\in\mathcal{H}.
\end{equation}
which can be written as follow
\begin{equation*}
\left\{\begin{array}{ll}
v-i\mu u=f&\text{in }\Omega
\\
-\Delta^{2}u+\mathrm{div}(a(x)\nabla v)-i\mu v=g&\text{in }\Omega,
\end{array}\right.
\end{equation*}
or equivalently,
\begin{equation}\label{Splate2}
\left\{\begin{array}{ll}
v=f+i\mu u&\text{in }\Omega
\\
-\Delta^{2}u+i\mu\,\mathrm{div}(a(x)\nabla u)+\mu^{2}u=g+i\mu f-\mathrm{div}(a(x)\nabla f)&\text{in }\Omega.
\end{array}\right.
\end{equation}
Multiplying the second line of \eqref{Splate2} by $\overline{u}$ and integrating over $\Omega$ then by Green's formula we obtain
\begin{equation}\label{Splate3}
\int_{\Omega}(g+i\mu f)\overline{u}\,\ud x+\int_{\omega}a\nabla f.\nabla\overline{u}\,\ud x=\mu^{2}\int_{\Omega}|u|^{2}\,\ud x-\int_{\Omega}|\Delta u|^{2}\,\ud x-i\mu\int_{\omega}a|\nabla u|^{2}\,\ud x.
\end{equation}
Taking the imaginary part of \eqref{Splate3} we obtain
\begin{align}
\label{Splate4}
|\mu|\int_{\omega}a|\nabla u|^{2}\,\ud x&\leq\left(\int_{\Omega}\left(|g+i\mu f|^{2}\right)\,\ud x\right)^{\frac{1}{2}}.\left(\int_{\Omega}|u|^{2}\,\ud x\right)^{\frac{1}{2}}+\left(\int_{\Omega}|\nabla f|^{2}\,\ud x\right)^{\frac{1}{2}}.\left(\int_{\Omega}|\nabla u|^{2}\,\ud x\right)^{\frac{1}{2}}\nonumber
\\
&\leq \left(\mu^{2}\int_{\Omega}|\Delta f|^{2}\,\ud x+\int_{\Omega}|g|^{2}\,\ud x\right)^{\frac{1}{2}}.\left(\left(\int_{\Omega}|u|^{2}\,\ud x\right)^{\frac{1}{2}}+\left(\int_{\Omega}|\nabla u|^{2}\,\ud x\right)^{\frac{1}{2}}\right)
\end{align}

%
%

Then by setting $u=u_{1}\,\mathbb{1}_{\omega}+ u_{2}\, \mathbb{1}_{\Omega\setminus\bar{\omega}}$, $v=v_{1}\,\mathbb{1}_{\omega}+v_{2}\,\mathbb{1}_{\Omega\setminus\bar{\omega}}$, $f=f_{1}\,\mathbb{1}_{\omega}+f_{2}\, \mathbb{1}_{\Omega \setminus \bar{\omega}}$ and $g=g_{1}\,\mathbb{1}_{\omega}+g_{2}\,\mathbb{1}_{\Omega\setminus\bar{\omega}}$ system \eqref{Splate2} is transformed to the following transmission equation   
\begin{equation}\label{Splate5}
\left\{\begin{array}{ll}
v_{1}=i\mu u_{1}+f_{1}&\text{in }\omega
\\
v_{2}=i\mu u_{2}+f_{2}&\text{in }\Omega\backslash\overline{\omega}
\\
-\Delta^{2}u_{1}+id\mu\Delta u_{1}+\mu^{2}u_{1}=g_{1}+i\mu f_{1}-d\Delta f_{1}&\text{in }\omega
\\
-\Delta^{2}u_{2}+\mu^{2}u_{2}=g_{2}+i\mu f_{2}&\text{in }\Omega\backslash\overline{\omega},
\end{array}\right.
\end{equation}
where the following the transmission conditions
\begin{equation}\label{Splate6}
\left\{\begin{array}{ll}
u_{1}=u_{2}&\text{on }\mathcal{I}
\\
\partial_{\nu}u_{1}=\partial_{\nu}u_{2}&\text{on }\mathcal{I}
\\
\Delta  u_{1}=\Delta u_{2}&\text{on }\mathcal{I}
\\
\partial_{\nu}(\Delta u_{1}-id\mu u_{1}-df_{1})=\partial_{\nu}\Delta u_{2}&\text{on }\mathcal{I},
\end{array}\right.
\end{equation}
follow from the regularity of the state, and with the boundary conditions
\begin{equation}\label{Splate7}
\left\{\begin{array}{ll}
u_{1}=\Delta u_{1}=0&\text{on }\Gamma_{1},
\\
u_{2}=\Delta u_{2}=0&\text{on }\Gamma_{0},
\end{array}\right.
\end{equation}
where $\nu(x)$ denote the outer unit normal to $\Omega\setminus\overline{\omega}$ on $\Gamma_{0}$ and on $\mathcal{I}$ (see Figure \ref{figP1}).

Now we can prove the resolvent estimate \eqref{Splate24}. We set $w_{1}=\Delta u_{1}+(|\mu|-id\mu)u_{1}$ and $w_{2}=\Delta u_{2}+|\mu|u_{2}$, then the system \eqref{Splate5}-\eqref{Splate7} can be recast as follow
\begin{equation}\label{Splate16}
\left\{\begin{array}{ll}
\ds-\Delta w_{1}+|\mu|w_{1}=\Phi_{1}&\text{in }\omega
\\
\ds-\Delta w_{2}+|\mu|w_{2}=\Phi_{2}&\text{in }\Omega\setminus\overline{\omega},
\end{array}\right.
\end{equation}
the transmission conditions
\begin{equation}\label{Splate17}
\left\{\begin{array}{ll}
w_{1}=w_{2}+\phi_{1}&\text{on }\mathcal{I}
\\
\partial_{\nu}w_{1}=\partial_{\nu}w_{2}+\phi_{2}&\text{on }\mathcal{I},
\end{array}\right.
\end{equation}
and the boundary conditions
\begin{equation}\label{Splate18}
\left\{\begin{array}{ll}
w_{1}=0&\text{on }\Gamma_{1}
\\
w_{2}=0&\text{on }\Gamma_{0},
\end{array}\right.
\end{equation}
where we have denoted by $\Phi_{1}=g_{1}+i\mu f_{1}-d\Delta f_{1}-id|\mu|.\mu u_{1}$, $\ds\Phi_{2}=g_{2}+i\mu f_{2}$, $\phi_{1}=-id\mu u_{1}$ and $\phi_{2}=d\partial_{\nu}f_{1}$.

We denoted by $B_{r}$ a ball of radius $r>0$ in $\omega$ and $B_{r}^{c}$ its complementary such that $B_{4r}\subset\omega$. Let's introduce the cut-off function $\chi\in\mathcal{C}^{\infty}(\omega)$ by
$$
\chi(x)=\left\{\begin{array}{ll}
1&\text{in } B_{3r}^{c}
\\
0&\text{in } B_{2r}.
\end{array}\right.
$$
Next, we denote by $\widetilde{w}_{1}=\chi w_{1}$ then from the first line of \eqref{Splate16}, one sees that
\begin{equation}\label{Splate19}
\begin{array}{ll}
-\Delta\widetilde{w}_{1}+|\mu|\widetilde{w}_{1}=\widetilde{\Phi}_{1}&\text{in }\omega,
\end{array}
\end{equation}
where $\widetilde{\Phi}_{1}=\chi\Phi_{1}-[\Delta,\chi]w_{1}$.  We denote by $\Omega_{1}=\omega\setminus\overline{B}_{r}$ and $\Omega_{2}=\Omega\setminus\overline{\omega}$.

Our proof of \eqref{Splate24} is based on a Carleman estimate established in \cite{AHR2} by Ammari, Hassine and Robbiano and recalled here in the following theorem.
\begin{thm}\cite[Theorem 3.2]{AHR2}\label{Pcar5}
Consider a bounded smooth open set $\mathcal{U}$ of $\R^{n}$ with boundary $\partial\mathcal{U}=\gamma$. We set $\mathcal{U}_{1}$ and $\mathcal{U}_{2}$ two smooth open subsets of $\mathcal{U}$ with boundaries $\partial\mathcal{U}_{1}=\gamma_{0}$ and $\partial\mathcal{U}_{2}=\gamma_{0}\cup\gamma$ such that $\overline{\gamma}_{0}\cup\overline{\gamma}=\emptyset$. We denote by $\nu(x)$ the unit outer normal to $\mathcal{U}_{2}$ if $x\in\gamma_{0}\cup\gamma$.

For $\tau$ a large parameter and $\varphi_{1}$ and $\varphi_{2}$ two weight functions of class $\mathcal{C}^{\infty}$ in 
$\overline{\mathcal{U}}_{1}$ and $\overline{\mathcal{U}}_{2}$ respectively such that 
$\varphi_{1|\gamma_{0}}=\varphi_{2|\gamma_{0}}$ we denote by $\varphi(x)=\mathrm{diag}(\varphi_{1}(x),\varphi_{2}(x))$ 
and   let $\alpha$ be a non null complex number. We set the differential operator
$$
P=\mathrm{diag}(P_{1},P_{2})=\mathrm{diag}\left(-\Delta\pm\tau,-\Delta\pm\tau\right),
$$
and its conjugate operator
$$
P(x,D,\tau)=\e^{\tau\varphi}P\e^{-\tau\varphi}=\mathrm{diag}(P_{1}(x,D,\tau),P_{2}(x,D,\tau)),
$$
with principal symbol $p(x,\xi,\tau)$ given by
\begin{align*}
p(x,\xi,\tau)&=\mathrm{diag}(p_{1}(x,\xi,\tau),p_{2}(x,\xi,\tau))
\\
&=\mathrm{diag}(|\xi|^{2}
+2i\tau \xi\nabla\varphi_{1}-\tau^{2}|\nabla\varphi_{1}|^{2},|\xi|^{2}+2i\tau\xi\nabla\varphi_{2}-\tau^{2}|\nabla\varphi_{2}|^{2}).
\end{align*}
We define the tangential operators $\op (B_{1})$  and $\op(B_{2})$ by
\begin{equation}\label{Pcar1}
\op(B_{1})u=u_{1|\gamma_{0}}-u_{2|\gamma_{0}}\qquad\text{and}\qquad\op(B_{2})u=\partial_{\nu}u_{1|\gamma_{0}}-\partial_{\nu}u_{2|\gamma_{0}}.
\end{equation} 

Assume that the weight function $\varphi$ defined on $\mathcal{U}$ satisfies
\begin{eqnarray}
|\nabla\varphi_{k}(x)|> 0,\;\forall\,x\in\overline{\mathcal{U}}_{k},\quad k=1,2,\label{Pcar48}
\\
\partial_{\nu}\varphi_{|\gamma}(x)< 0,\label{Pcar52}
\\
\partial_{\nu}\varphi_{k|\gamma_{0}}(x)>0,\quad k=1,2,
\\
\left(\partial_{\nu}\varphi_{1|\gamma_{0}}(x)\right)^{2}-\left(\partial_{\nu}\varphi_{2|\gamma_{0}}(x)\right)^{2}>1,
\end{eqnarray}
and the sub-ellipticity condition
\begin{equation}\label{Pcar49}
\exists\,c>0,\;\forall\,(x,\xi)\in\overline{\mathcal{U}}_{k}\times\R^{n},\;p_{k}(x,\xi)=0\,\Longrightarrow\,\left\{\mathrm{Re}(p_{k}),\mathrm{Im}(p_{k})\right\}(x,\xi,\tau)\geq c\langle\xi,\tau\rangle^{3}.
\end{equation}
Then there exist $C>0$ and $\tau_{0}>0$ such that we have the following estimate
\begin{align}\label{Pcar4}
&\tau^{3}\|\e^{\tau\varphi}u\|_{L^{2}(\mathcal{U})}^{2}+\tau\|\e^{\tau\varphi}\nabla u\|_{L^{2}(\mathcal{U})}^{2}
\\
&\qquad  \leq C\Big(\|\e^{\tau\varphi}Pu\|_{L^{2}(\mathcal{U})}^{2}+\tau^{2}\|\e^{\tau\varphi}\op(B_{1})u\|_{H^{\frac{1}{2}}(\gamma_{0})}^{2}
+\tau\|\e^{\tau\varphi}\op(B_{2})u\|_{L^{2}(\gamma_{0})}^{2}\Big)   \notag
\end{align}
for all $\tau\geq\tau_{0}$ and $u=(u_{1},u_{2})\in H^{2}(\mathcal{U}_{1})\times H^{2}(\mathcal{U}_{2})$ such that $u_{2|\gamma}=0$.
\end{thm}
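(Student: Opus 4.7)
The plan is to establish the inequality \eqref{Pcar4} by the classical large-parameter Carleman machinery applied separately on $\mathcal{U}_1$ and $\mathcal{U}_2$, and then to control the boundary quadratic forms at $\gamma$ and at the interface $\gamma_0$ using the prescribed sign and gap conditions on the weights. The estimate is diagonal in its interior part but genuinely coupled at $\gamma_0$, so the heart of the argument will lie in the interface analysis.

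First, I would conjugate the operator. Setting $v_k = e^{\tau\varphi_k} u_k$ and $P_{k,\varphi} = e^{\tau\varphi_k}(-\Delta\pm\tau)e^{-\tau\varphi_k}$, I decompose $P_{k,\varphi} = Q_{k,2} + iQ_{k,1}$ into its formally self-adjoint and skew-adjoint parts, with principal symbols $\re p_k$ and $\im p_k$, so that
\[
\|P_{k,\varphi} v_k\|_{L^2(\mathcal{U}_k)}^2 = \|Q_{k,2} v_k\|^2 + \|Q_{k,1} v_k\|^2 + \bigl\langle i[Q_{k,2},Q_{k,1}] v_k, v_k\bigr\rangle + \mathcal{B}_k,
\]
where $\mathcal{B}_k$ is a boundary quadratic form in the traces of $v_k$ on $\partial\mathcal{U}_k$. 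The sub-ellipticity condition \eqref{Pcar49}, combined with a Garding-type inequality with large parameter, provides a positive lower bound on the interior commutator term of order $\tau^3\|v_k\|^2 + \tau\|\nabla v_k\|^2$, modulo absorbable lower order contributions; this is the standard Lerner-style calculus with tangential symbols depending on $\tau$.

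Next I would handle the outer boundary $\gamma$, which is only relevant for $k=2$ since $u_{2|\gamma}=0$. Integration by parts there leaves only a term proportional to $\partial_\nu\varphi_{2|\gamma}\,|\partial_\nu v_2|^2$, which the sign condition \eqref{Pcar52} makes favorable. The delicate part is the interface $\gamma_0$, where contributions from both sides couple the four traces $v_{k|\gamma_0}$ and $\partial_\nu v_{k|\gamma_0}$, $k=1,2$. I would write the boundary form $\mathcal{B}_1 + \mathcal{B}_2$ on $\gamma_0$ as a tangential quadratic form in these four traces, then rewrite $(v_{1|\gamma_0},\partial_\nu v_{1|\gamma_0})$ in terms of $(v_{2|\gamma_0},\partial_\nu v_{2|\gamma_0})$ and the jumps $\op(B_1)u$, $\op(B_2)u$ via \eqref{Pcar1}. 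What remains is a $2\times 2$ tangentially pseudo-differential quadratic form in $(v_{2|\gamma_0},\partial_\nu v_{2|\gamma_0})$ whose coefficients are governed by $\partial_\nu\varphi_{k|\gamma_0}$. The signs $\partial_\nu\varphi_{k|\gamma_0}>0$ together with the gap $(\partial_\nu\varphi_1)^2-(\partial_\nu\varphi_2)^2>1$ are precisely what should allow one to diagonalize this form with a strictly positive leading coefficient; a tangential Garding inequality then yields coercivity on $\gamma_0$ up to the jumps, producing the terms $\tau^2\|e^{\tau\varphi}\op(B_1)u\|_{H^{1/2}(\gamma_0)}^2 + \tau\|e^{\tau\varphi}\op(B_2)u\|_{L^2(\gamma_0)}^2$ on the right of \eqref{Pcar4}.

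The final step will be routine: collect the interior lower bound with the boundary absorption, take $\tau_0$ large enough so that all lower order errors disappear, and convert back from $v_k$ to $u_k$ via $v_k = e^{\tau\varphi_k} u_k$. The main obstacle, and the step that truly requires the full strength of the hypotheses on $\varphi$, will be the interface analysis: one must diagonalize the boundary quadratic form using the gap condition so that the terms coming from $\mathcal{U}_1$ (where no Dirichlet condition is imposed, since $\partial\mathcal{U}_1=\gamma_0$) are absorbed by those coming from $\mathcal{U}_2$, rather than spoiling the estimate. This plays the role of a Lopatinski--Sakamoto--Kreiss condition for the transmission problem, and the gap $(\partial_\nu\varphi_1)^2 - (\partial_\nu\varphi_2)^2 > 1$ is its precise Carleman formulation in this setting.
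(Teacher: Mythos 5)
You should first note that the paper you are working from does not prove this theorem at all: it is quoted verbatim from \cite[Theorem 3.2]{AHR2} and used as a black box, so there is no in-paper proof to compare your argument against. Your outline therefore has to be judged on its own merits as a reconstruction of the proof in \cite{AHR2}, which follows the interface-Carleman technology of Lebeau--Robbiano and Le Rousseau--Robbiano \cite{rousseau-robbiano}.

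The interior part of your sketch (conjugation, splitting $P_{k,\varphi}=Q_{k,2}+iQ_{k,1}$, sub-ellipticity \eqref{Pcar49} plus a large-parameter G\aa rding inequality giving $\tau^{3}\|v_k\|^{2}+\tau\|\nabla v_k\|^{2}$ up to boundary terms) is standard and correct. The genuine gap is in the interface step, which you assert rather than prove. The boundary quadratic form $\mathcal{B}_1+\mathcal{B}_2$ on $\gamma_0$, rewritten in the traces $(v_{2|\gamma_0},\partial_\nu v_{2|\gamma_0})$ and the jumps, is \emph{not} pointwise positive definite on all of the tangential phase space, so a single ``diagonalization plus tangential G\aa rding'' cannot close the argument. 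The known proofs require a microlocal partition of a neighbourhood of $\gamma_0$ in the tangential variables $(\xi',\tau)$ according to the location of the roots in $\xi_n$ of the conjugated symbols $p_k$: in the zone where both roots for side $k$ lie in the appropriate half-plane, the full trace of $v_k$ is controlled by the interior norms (Calder\'on-projector or symbol-factorization argument); in the complementary zone only a partial, Lopatinskii-type estimate is available and the transmission conditions must be used to import control from the other side. The role of the hypothesis $\left(\partial_{\nu}\varphi_{1|\gamma_{0}}\right)^{2}-\left(\partial_{\nu}\varphi_{2|\gamma_{0}}\right)^{2}>1$ is precisely to guarantee that wherever the $\mathcal{U}_2$-side loses ellipticity the $\mathcal{U}_1$-side is still elliptic, so the two bad zones never overlap; it is not used as the determinant condition of a fixed $2\times2$ Hermitian matrix. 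Without this zone decomposition (and the attendant pseudo-differential estimates for the traces, which is where the $H^{1/2}(\gamma_0)$ norm of $\op(B_1)u$ with weight $\tau^{2}$ and the $L^{2}(\gamma_0)$ norm of $\op(B_2)u$ with weight $\tau$ actually arise), your step from the sign and gap conditions to coercivity on $\gamma_0$ does not follow, and this is exactly the content of the cited theorem rather than a routine verification.
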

Following to \cite{burq} or \cite{hassine2} or \cite{hassine3} we can find four weight functions $\varphi_{1,1}$, $\varphi_{1,2}$, $\varphi_{2,1}$ and $\varphi_{2,2}$, a finite number of points $x_{j,k}^{i}$ where $\overline{B(x_{j,k}^{i},2\varepsilon)}\subset\Omega_{j}$ for all $j,k=1,2$ and $i=1,\ldots,N_{j,k}$ such that $\ds\left[\bigcup_{i=1}^{N_{j,1}}B(x_{j,1}^{i},2\varepsilon)\right]\bigcap\left[\bigcup_{i=1}^{N_{j,2}}B(x_{j,2}^{i},2\varepsilon)\right]=\emptyset$ and by denoting $U_{j,k}=\ds\Omega_{j}\bigcap\left(\bigcup_{i=1}^{N_{j,k}}\overline{B(x_{j,k}^{i},\varepsilon)}\right)^{c}$ the weight function $\varphi_{k}=\mathrm{diag}(\varphi_{1,k},\varphi_{2,k})$ verifying the assumption \eqref{Pcar48}-\eqref{Pcar49} in $U_{1,k}\cup U_{2,k}$ with  $\gamma_{0}=\mathcal{I}$. Moreover, $\varphi_{j,k}<\varphi_{j,k+1}$ in $\ds\bigcup_{i=1}^{N_{j,k}}B(x_{j,k}^{i},2\varepsilon)$ for all $j,k=1,2$ where we have denoted by $\varphi_{j,3}=\varphi_{j,1}$.
\\
Let $\chi_{j,k}$ (for $j,k=1,2$) four cut-off functions equal to $1$ in $\ds\left(\bigcup_{i=1}^{N_{j,k}}B(x_{j,k}^{i},2\varepsilon)\right)^{c}$ and supported in $\ds\left(\bigcup_{i=1}^{N_{j,k}}B(x_{j,k}^{i},\varepsilon)\right)^{c}$ (in order to eliminate the critical points of the weight functions $\varphi_{j,k}$). We set $w_{1,1}=\chi_{1,1}\widetilde{w}_{1}$, $w_{1,2}=\chi_{1,2}\widetilde{w}_{1}$, $w_{2,1}=\chi_{2,1}w_{2}$ and $w_{2,2}=\chi_{2,2}w_{2}$. Then from system \eqref{Splate17} and equations \eqref{Splate7} and \eqref{Splate19}, then for $k=1,2$ we obtain
\begin{equation}\label{Splate20}
\left\{\begin{array}{ll}
-\Delta w_{1,k}+|\mu|w_{1,k}=\Psi_{1,k}&\text{in }\omega
\\
-\Delta w_{2,k}+|\mu|w_{2,k}=\Psi_{2,k}&\text{in }\Omega\setminus\overline{\omega}
\\
w_{1,k}=w_{2,k}+\phi_{1}&\text{on }\mathcal{I}
\\
\partial_{\nu}w_{1,k}= \partial_{\nu} w_{2,k}+\phi_{2}&\text{on }\mathcal{I}
\\
w_{1,k}=0&\text{on }\Gamma_{1}
\\
w_{2,k}=0&\text{on }\Gamma_{0},
\end{array}\right.
\end{equation}
where
\begin{equation}\label{Splate21}
\left\{\begin{array}{l}
\Psi_{1,k}=\chi_{1,k}\widetilde{\Phi}_{1}-[\Delta,\chi_{1,k}]\widetilde{w}_{1}
\\
\Psi_{2,k}=\chi_{2,k}\Phi_{2}-[\Delta,\chi_{2,k}]w_{2}.
\end{array}\right.
\end{equation}
Applying now Carleman estimate \eqref{Pcar4} to the system \eqref{Splate20} with $\tau=|\mu|$ then for $k=1,2$ we have
\begin{multline*}
\tau^{3 }\sum_{j=1,2}\|\e^{\tau\varphi_{j,k}}w_{j,k}\|_{L^{2}(U_{j,k})}^{2}+\tau\sum_{j=1,2}\|\e^{\tau\varphi_{j,k}}\nabla w_{j,k}\|_{L^{2}(U_{j,k})}^{2}
\\
\leq C\Big(\|\e^{\tau\varphi_{1,k}}\Psi_{1,k}\|_{L^{2}(U_{1,k})}^{2}
+\|\e^{\tau\varphi_{2,k}}\Psi_{2,k}\|_{L^{2}(U_{2,k})}^{2}
+\tau^{2}\|\e^{\tau\varphi_{1,k}}\phi_{1}\|_{H^{\frac{1}{2}}(\mathcal{I})}^{2}+\tau\|\e^{\tau\varphi_{1,k}}\phi_{2}\|_{L^{2}(\mathcal{I})}^{2}\Big).
\end{multline*}
From the expression of $\Psi_{1,k}$ and $\Psi_{2,k}$ in \eqref{Splate21}, then we can write
\begin{multline*}
\tau^{3 }\sum_{j=1,2}\|\e^{\tau\varphi_{j,k}}w_{j,k}\|_{L^{2}(U_{j,k})}^{2}+\tau\sum_{j=1,2}\|\e^{\tau\varphi_{j,k}}\nabla w_{j,k}\|_{L^{2}(U_{j,k})}^{2}\leq C\Big(\|\e^{\tau\varphi_{1,k}}\Phi_{1}\|_{L^{2}(U_{1,k})}^{2}
\\
+\|\e^{\tau\varphi_{2,k}}\Phi_{2}\|_{L^{2}(U_{2,k})}^{2}
+\|\e^{\tau\varphi_{1,k}}[\Delta,\chi_{1,k}]\widetilde{w}_{1}\|_{L^{2}(U_{1,k})}^{2}+\|\e^{\tau\varphi_{1,k}}[\Delta,\chi]w_{1}\|_{L^{2}(U_{1,k})}^{2}
\\
+\|\e^{\tau\varphi_{2,k}}[\Delta,\chi_{2,k}]w_{2}\|_{L^{2}(U_{2,k})}^{2}+\tau^{2}\|\e^{\tau\varphi_{1,k}}\phi_{1}\|_{H^{\frac{1}{2}}(\mathcal{I})}^{2}+\tau\|\e^{\tau\varphi_{1,k}}\phi_{2}\|_{L^{2}(\mathcal{I})}^{2}\Big).
\end{multline*}
Adding the two last estimates and using the property of the weight functions $\varphi_{j,1}<\varphi_{j,2}$ in $\ds\bigcup_{i=1}^{N_{j,1}}B(x_{j,1}^{i},2\varepsilon)$ and $\varphi_{j,2}<\varphi_{j,1}$ in $\ds\bigcup_{i=1}^{N_{j,2}}B(x_{j,2}^{i},2\varepsilon)$ for all $j=1,2$, then we can absorb first order the terms $[\Delta,\chi_{1,k}]\widetilde{w}_{1}$ and $[\Delta,\chi_{2,k}]w_{2}$ at the right hand side into the left hand side for $\tau>0$  sufficiently large, mainly we obtain
\begin{multline*}
\qquad\tau^{3}\int_{\Omega_{1}}\left(\e^{2\tau\varphi_{1,1}}+\e^{2\tau\varphi_{1,2}}\right)|\widetilde{w}_{1}|^{2}\,\ud x+\tau^{3}\int_{\Omega_{2}}\left(\e^{2\tau\varphi_{2,1}}+\e^{2\tau\varphi_{2,2}}\right)|w_{2}|^{2}\,\ud x
\\
+\tau\int_{\Omega_{1}}\left(\e^{2\tau\varphi_{1,1}}+\e^{2\tau\varphi_{1,2}}\right)|\nabla\widetilde{w}_{1}|^{2}\,\ud x+\tau\int_{\Omega_{2}}\left(\e^{2\tau\varphi_{2,1}}+\e^{2\tau\varphi_{2,2}}\right)|\nabla w_{2}|^{2}\,\ud x
\\
\leq C\bigg(\int_{\Omega_{1}}\left(\e^{2\tau\varphi_{1,1}}+\e^{2\tau\varphi_{1,2}}\right)|\Phi_{1}|^{2}\,\ud x+\int_{\Omega_{2}}\left(\e^{2\tau\varphi_{2,1}}+\e^{2\tau\varphi_{2,2}}\right)|\Phi_{2}|^{2}\,\ud x
\\
 +\tau^{2}\left(\|\e^{\tau\varphi_{1,1}}\phi_{1}\|_{H^{\frac{1}{2}}(\mathcal{I})}^{2}+\|\e^{\tau\varphi_{1,2}}\phi_{1}\|_{H^{\frac{1}{2}}(\mathcal{I})}^{2}\right)+\tau\left(\|\e^{\tau\varphi_{1,1}}\phi_{2}\|_{L^{2}(\mathcal{I})}^{2}+\|\e^{\tau\varphi_{1,2}}\phi_{2}\|_{L^{2}(\mathcal{I})}^{2}\right)
\\
+\int_{\Omega_{1}}\left(\e^{2\tau\varphi_{1,1}}+\e^{2\tau\varphi_{1,2}}\right)|[\Delta,\chi]w_{1}|^{2}\,\ud x\bigg).
\end{multline*}
Since $\chi\equiv 1$ outside $B_{3r}$ then using the expressions of $\phi_{1}$ and $\phi_{2}$ we obtain
\begin{multline}\label{Splate22}
\tau^{3}\int_{\omega\setminus B_{3r}}\left(\e^{2\tau\varphi_{1,1}}+\e^{2\tau\varphi_{1,2}}\right)|w_{1}|^{2}\,\ud x+\tau^{3}\int_{\Omega\setminus\omega}\left(\e^{2\tau\varphi_{2,1}}+\e^{2\tau\varphi_{2,2}}\right)|w_{2}|^{2}\,\ud x
\\
\tau\int_{\omega\setminus B_{3r}}\left(\e^{2\tau\varphi_{1,1}}+\e^{2\tau\varphi_{1,2}}\right)|\nabla w_{1}|^{2}\,\ud x+\tau\int_{\Omega\setminus\omega}\left(\e^{2\tau\varphi_{2,1}}+\e^{2\tau\varphi_{2,2}}\right)|\nabla w_{2}|^{2}\,\ud x
\\
\leq C\bigg(\int_{\omega}\left(\e^{2\tau\varphi_{1,1}}+\e^{2\tau\varphi_{1,2}}\right)|\Phi_{1}|^{2}\,\ud x+\int_{\Omega\setminus\omega}\left(\e^{2\tau\varphi_{2,1}}+\e^{2\tau\varphi_{2,2}}\right)|\Phi_{2}|^{2}\,\ud x
\\
+\tau^{3}\left(\|\e^{\tau\varphi_{1,1}}u_{1}\|_{H^{\frac{1}{2}}(\mathcal{I})}^{2}+\|\e^{\tau\varphi_{1,2}}u_{1}\|_{H^{\frac{1}{2}}(\mathcal{I})}^{2}\right)+\tau\left(\|\e^{\tau\varphi_{1,1}}\partial_{\nu}f_{1}\|_{L^{2}(\mathcal{I})}^{2}+\|\e^{\tau\varphi_{1,2}}\partial_{\nu}f_{1}\|_{L^{2}(\mathcal{I})}^{2}\right)
\\
+\int_{\Omega_{1}}\left(\e^{2\tau\varphi_{1,1}}+\e^{2\tau\varphi_{1,2}}\right)|[\Delta,\chi]w_{1}|^{2}\,\ud x\bigg).
\end{multline}
Taking the maximum of $\varphi_{1,1}$, $\varphi_{1,2}$, $\varphi_{2,1}$ and $\varphi_{2,2}$ in the right hand side of \eqref{Splate22} and their minimum in the left hand side, next since the operator $[\Delta,\chi]$ is of the first order then by Poincar\'e's inequality, the trace formula and the expressions of $\Phi_{1}$ and $\Phi_{2}$, we follow  
\begin{equation*}
\begin{split}
\|w_{1}\|_{L^{2}(\omega\setminus B_{3r})}^{2}+\|w_{2}\|_{L^{2}(\Omega\setminus\overline{\omega})}^{2}+\|\nabla w_{1}\|_{L^{2}(\omega\setminus B_{3r})}^{2}+\|\nabla w_{2}\|_{L^{2}(\Omega\setminus\overline{\omega})}^{2}
\\
\leq C\e^{C\tau}\Big(\|\nabla w_{1}\|_{L^{2}(\omega)}^{2}+\|f_{1}\|_{L^{2}(\omega)}^{2}+\|\Delta f_{1}\|_{L^{2}(\omega)}^{2}+\|f_{2}\|_{L^{2}(\Omega\setminus\overline{\omega})}^{2}
\\
+\|g_{1}\|_{L^{2}(\omega)}^{2}+\|g_{2}\|_{L^{2}(\Omega\setminus\overline{\omega})}^{2}+\|u_{1}\|_{H^{1}(\omega)}^{2}\Big).
\end{split}
\end{equation*}

Now let $\widetilde{B}_{r}$ a ball of reduce $r$ such that $\overline{B}_{4r}\subset\omega$ and $\overline{B}_{4r}\cap\overline{\widetilde{B}}_{4r}=\emptyset$. We resume the same work with $\widetilde{B}_{r}$ instead of $B_{r}$ we obtain a similar estimate as \eqref{Splate8} namely, one gets
\begin{equation}\label{Splate9}
\begin{split}
\|w_{1}\|_{L^{2}(\omega\setminus\widetilde{B}_{3r})}^{2}+\|w_{2}\|_{L^{2}(\Omega\setminus\overline{\omega})}^{2}+\|\nabla w_{1}\|_{L^{2}(\omega\setminus\widetilde{B}_{3r})}^{2}+\|\nabla w_{2}\|_{L^{2}(\Omega\setminus\overline{\omega})}^{2}
\\
\leq C\e^{C\tau}\Big(\|\nabla w_{1}\|_{L^{2}(\omega)}^{2}+\|f_{1}\|_{L^{2}(\omega)}^{2}+\|\Delta f_{1}\|_{L^{2}(\omega)}^{2}+\|f_{2}\|_{L^{2}(\Omega\setminus\overline{\omega})}^{2}
\\
+\|g_{1}\|_{L^{2}(\omega)}^{2}+\|g_{2}\|_{L^{2}(\Omega\setminus\overline{\omega})}^{2}+\|u_{1}\|_{H^{1}(\omega)}^{2}\Big).
\end{split}
\end{equation}
Summing up the two estimates \eqref{Splate8} and \eqref{Splate9} and using the fact that $\overline{B}_{3r}\cap\overline{\widetilde{B}}_{3r}=\emptyset$, we follow that
\begin{equation}\label{Splate32}
\begin{split}
\|w_{1}\|_{L^{2}(\omega)}^{2}+\|w_{2}\|_{L^{2}(\Omega\setminus\overline{\omega})}^{2}+\|\nabla w_{1}\|_{L^{2}(\omega)}^{2}+\|\nabla w_{2}\|_{L^{2}(\Omega\setminus\overline{\omega})}^{2}
\\
\leq C\e^{C\tau}\Big(\|\nabla w_{1}\|_{L^{2}(\omega)}^{2}+\|f_{1}\|_{L^{2}(\omega)}^{2}+\|\Delta f_{1}\|_{L^{2}(\omega)}^{2}
\\
+\|f_{2}\|_{L^{2}(\Omega\setminus\overline{\omega})}^{2}+\|g_{1}\|_{L^{2}(\omega)}^{2}+\|g_{2}\|_{L^{2}(\Omega\setminus\overline{\omega})}^{2}+\|u_{1}\|_{H^{1}(\omega)}^{2}\Big).
\end{split}
\end{equation}
Noting that $u_{1}$ and $u_{2}$ are solution of the following problem
\begin{equation*}
\left\{\begin{array}{ll}
\ds\Delta u_{1}+|\mu|u_{1}=w_{1}+id|\mu|.\mu u_{1}&\text{in }\omega
\\
\ds\Delta u_{2}+|\mu|u_{2}=w_{2}&\text{in }\Omega\setminus\overline{\omega},
\end{array}\right.
\end{equation*}
the transmission conditions
\begin{equation*}
\left\{\begin{array}{ll}
u_{1}=u_{2}&\text{on }\mathcal{I}
\\
\partial_{\nu}u_{1}=\partial_{\nu}u_{2}&\text{on }\mathcal{I},
\end{array}\right.
\end{equation*}
and the boundary conditions
\begin{equation*}
\left\{\begin{array}{ll}
u_{1}=0&\text{on }\Gamma_{1}
\\
u_{2}=0&\text{on }\Gamma_{0},
\end{array}\right.
\end{equation*}
then as done with $w_{1}$ and $w_{2}$ we can apply Carleman estimate to $u_{1}$ and $u_{2}$ and we get an estimate of the same kind as \eqref{Splate32}, namely we have
\begin{equation*}
\begin{split}
\|u_{1}\|_{L^{2}(\omega)}^{2}+\|u_{2}\|_{L^{2}(\Omega\setminus\overline{\omega})}^{2}+\|\nabla u_{1}\|_{L^{2}(\omega)}^{2}+\|\nabla u_{2}\|_{L^{2}(\Omega\setminus\overline{\omega})}^{2}
\\
\leq C\e^{C|\mu|}\Big(\|\nabla u_{1}\|_{L^{2}(\omega)}^{2}+\|w_{1}\|_{L^{2}(\omega)}^{2}+\|w_{2}\|_{L^{2}(\Omega\setminus\overline{\omega})}^{2}\Big),
\end{split}
\end{equation*}
which imply in particular that
\begin{equation}\label{Splate35}
\begin{split}
\|\nabla u_{2}\|_{L^{2}(\Omega\setminus\overline{\omega})}^{2}\leq C\e^{C|\mu|}\Big(\|\nabla u_{1}\|_{L^{2}(\omega)}^{2}+\|w_{1}\|_{L^{2}(\omega)}^{2}+\|w_{2}\|_{L^{2}(\Omega\setminus\overline{\omega})}^{2}\Big).
\end{split}
\end{equation}
From \eqref{Splate16}, performing now the following calculation
\begin{align*}
\|\nabla w_{1}\|_{L^{2}(\omega)}^{2}+\|\nabla w_{2}\|_{L^{2}(\omega)}^{2}&=-\left\langle\Delta w_{1},w_{1}\right\rangle_{L^{2}(\omega)}-\left\langle\Delta w_{2},w_{2}\right\rangle_{L^{2}(\Omega\setminus\omega)}
\\
&-\left\langle\partial_{\nu}w_{2},w_{2}\right\rangle_{L^{2}(\mathcal{I})}+\left\langle\partial_{\nu}w_{1},w_{1}\right\rangle_{L^{2}(\mathcal{I})}
\\
&=\left\langle\Phi_{1},w_{1}\right\rangle_{L^{2}(\omega)}+\left\langle\Phi_{2},w_{2}\right\rangle_{L^{2}(\Omega\setminus\omega)}-|\mu|\left(\|w_{1}\|_{L^{2}(\omega)}^{2}+\|w_{2}\|_{L^{2}(\Omega\setminus\omega)}^{2}\right)
\\
&-\left\langle\partial_{\nu}w_{2},w_{2}\right\rangle_{L^{2}(\mathcal{I})}+\left\langle\partial_{\nu}w_{1},w_{1}\right\rangle_{L^{2}(\mathcal{I})}.
\end{align*}
Using the transmission conditions \eqref{Splate17} we obtain
\begin{align*}
\left\langle\partial_{\nu}w_{1},w_{1}\right\rangle_{L^{2}(\mathcal{I})}-\left\langle\partial_{\nu}w_{2},w_{2}\right\rangle_{L^{2}(\mathcal{I})}&=id\mu\left\langle\partial_{\nu}w_{1},u_{1}\right\rangle_{L^{2}(\mathcal{I})}+d\left\langle\partial_{\nu}f_{1},w_{1}+id\mu u_{1}\right\rangle_{L^{2}(\mathcal{I})}
\\
&=-id\mu\left(\left\langle\Delta w_{1},u_{1}\right\rangle_{L^{2}(\omega)}+\left\langle\nabla w_{1},\nabla u_{1}\right\rangle_{L^{2}(\omega)}\right)
\\
&+d\left\langle\partial_{\nu}f_{1},w_{1}+id\mu u_{1}\right\rangle_{L^{2}(\mathcal{I})}
\\
&=-id\mu\Big(|\mu|\left\langle w_{1},u_{1}\right\rangle_{L^{2}(\omega)}+\left\langle\nabla w_{1},\nabla u_{1}\right\rangle_{L^{2}(\omega)}
\\
&-\left\langle\Phi_{1},u_{1}\right\rangle_{L^{2}(\omega)}\Big)+d\left\langle\partial_{\nu}f_{1},w_{1}+id\mu u_{1}\right\rangle_{L^{2}(\mathcal{I})}.
\end{align*}
Putting together the two last equalities we find
\begin{align}\label{Splate34}
\|\nabla w_{1}\|_{L^{2}(\omega)}^{2}+\|\nabla w_{2}\|_{L^{2}(\omega)}^{2}&=\left\langle\Phi_{1},w_{1}\right\rangle_{L^{2}(\omega)}+\left\langle\Phi_{2},w_{2}\right\rangle_{L^{2}(\Omega\setminus\omega)}-|\mu|\left(\|w_{1}\|_{L^{2}(\omega)}^{2}+\|w_{2}\|_{L^{2}(\Omega\setminus\omega)}^{2}\right)\nonumber
\\
&-id\mu\left(|\mu|\left\langle w_{1},u_{1}\right\rangle_{L^{2}(\omega)}+\left\langle\nabla w_{1},\nabla u_{1}\right\rangle_{L^{2}(\omega)}-\left\langle \Phi_{1},u_{1}\right\rangle_{L^{2}(\omega)}\right)\nonumber
\\
&+d\left\langle\partial_{\nu}f_{1},w_{1}\right\rangle_{L^{2}(\mathcal{I})}-id^{2}\mu\left\langle\partial_{\nu}f_{1},u_{1}\right\rangle_{L^{2}(\mathcal{I})}.
\end{align}
The Poincar\'e inequality, the trace formula and the Young's inequality imply
\begin{align}\label{Splate8}
&|\mu|\left(\|w_{1}\|_{L^{2}(\omega)}^{2}+\|w_{2}\|_{L^{2}(\Omega\setminus\omega)}^{2}\right)+\|\nabla w_{1}\|_{L^{2}(\omega)}^{2}+\|\nabla w_{2}\|_{L^{2}(\Omega\setminus\omega)}^{2}\nonumber
\\
&\leq C\left(\|\Phi_{1}\|_{L^{2}(\omega)}^{2}+\|\Phi_{2}\|_{L^{2}(\Omega\setminus\omega)}^{2}+|\mu|^{4}.\|\nabla u_{1}\|_{L^{2}(\omega)}^{2}+\|f_{1}\|_{H^{2}(\omega)}^{2}\right)\nonumber
\\
&\leq C\left(\mu^{2}\left(\|f_{1}\|_{H^{2}(\omega)}^{2}+\|f_{2}\|_{H^{2}(\Omega\setminus\omega)}^{2}\right)+\|g_{1}\|_{L^{2}(\omega)}^{2}+\|g_{2}\|_{L^{2}(\Omega\setminus\omega)}^{2}+|\mu|^{4}.\|\nabla u_{1}\|_{L^{2}(\omega)}^{2}\right).
\end{align}
Combining \eqref{Splate32} and \eqref{Splate8} we follow
\begin{equation*}
\begin{split}
\|w_{1}\|_{L^{2}(\omega)}^{2}+\|w_{2}\|_{L^{2}(\Omega\setminus\overline{\omega})}^{2}+C|\mu|\e^{C|\mu|}\left(\|w_{1}\|_{L^{2}(\omega)}^{2}+\|w_{2}\|_{L^{2}(\Omega\setminus\omega)}^{2}\right)
\\
\leq C_{1}\e^{C_{1}|\mu|}\Big(\|f_{1}\|_{H^{2}(\omega)}^{2}+\|f_{2}\|_{H^{2}(\Omega\setminus\omega)}^{2}+\|g_{1}\|_{L^{2}(\omega)}^{2}+\|g_{2}\|_{L^{2}(\Omega\setminus\omega)}^{2}+\|\nabla u_{1}\|_{L^{2}(\omega)}^{2}\Big).
\end{split}
\end{equation*}
From this last estimates and \eqref{Splate35} we find
\begin{equation}\label{Splate36}
\begin{split}
\|w_{1}\|_{L^{2}(\omega)}^{2}+\|w_{2}\|_{L^{2}(\Omega\setminus\omega)}^{2}+2|\mu|.\|\nabla u_{2}\|_{L^{2}(\Omega\setminus\omega)}^{2}\leq C_{1}\e^{C_{1}|\mu|}\Big(\|f_{1}\|_{H^{2}(\omega)}^{2}
\\
+\|f_{2}\|_{H^{2}(\Omega\setminus\omega)}^{2}+\|g_{1}\|_{L^{2}(\omega)}^{2}+\|g_{2}\|_{L^{2}(\Omega\setminus\omega)}^{2}+\|\nabla u_{1}\|_{L^{2}(\omega)}^{2}\Big).
\end{split}
\end{equation}
Evoking $u_{1}$ and $u_{2}$ through the expressions of $w_{1}$ and $w_{2}$ and using the transmission conditions \eqref{Splate6} and the boundary conditions \eqref{Splate7} to perform the following integration by parts
\begin{align*}
\|w_{1}\|_{L^{2}(\omega)}^{2}+\|w_{2}\|_{L^{2}(\Omega\setminus\omega)}^{2}=-2d\mu\,\im\langle \partial_{\nu}u_{1},u_{1}\rangle_{L^{2}(\mathcal{I})}+\|\Delta u_{1}\|_{L^{2}(\omega)}^{2}+\|\Delta u_{2}\|_{L^{2}(\Omega\setminus\omega)}^{2}
\\
+|\mu|^{2}\left((1+d^{2})\|u_{1}\|_{L^{2}(\omega)}^{2}+\|u_{2}\|_{L^{2}(\Omega\setminus\omega)}^{2}\right)-2|\mu|\left(\|\nabla u_{1}\|_{L^{2}(\omega)}^{2}+\|\nabla u_{2}\|_{L^{2}(\Omega\setminus\omega)}^{2}\right).
\end{align*}
From the Young's inequality we obtain
\begin{align}\label{Splate33}
\|w_{1}\|_{L^{2}(\omega)}^{2}+\|w_{2}\|_{L^{2}(\Omega\setminus\omega)}^{2}&\geq \|\Delta u_{1}\|_{L^{2}(\omega)}^{2}+\|\Delta u_{2}\|_{L^{2}(\Omega\setminus\omega)}^{2}
\\
&+|\mu|^{2}\left((1+d^{2})\|u_{1}\|_{L^{2}(\omega)}^{2}+\|u_{2}\|_{L^{2}(\Omega\setminus\omega)}^{2}\right)\nonumber
\\
&-2|\mu|\left(\|\nabla u_{1}\|_{L^{2}(\omega)}^{2}+\|\nabla u_{2}\|_{L^{2}(\Omega\setminus\omega)}^{2}\right)\nonumber
\\
&-\frac{d^{2}|\mu|^{2}}{\varepsilon}\|u_{1}\|_{H^{1}(\omega)}^{2}-\varepsilon\|u_{1}\|_{H^{2}(\omega)}^{2}.\nonumber
\end{align}
Combining \eqref{Splate36} and \eqref{Splate33}, taking $\varepsilon$ small enough and using the Poincar\'e inequality, one gets
\begin{align*}
\|\Delta u_{1}\|_{L^{2}(\omega)}^{2}+\|\Delta u_{2}\|_{L^{2}(\Omega\setminus\omega)}^{2}\leq C\e^{C|\mu|}\Big(\|\Delta f_{1}\|_{L^{2}(\omega)}^{2}+\|\Delta f_{2}\|_{L^{2}(\Omega\setminus\omega)}^{2}
\\
+\|g_{1}\|_{L^{2}(\omega)}^{2}+\|g_{2}\|_{L^{2}(\Omega\setminus\omega)}^{2}+\|\nabla u_{1}\|_{L^{2}(\omega)}^{2}\Big),
\end{align*}
which implies
\begin{align}\label{Splate23}
\|\Delta u\|_{L^{2}(\Omega)}^{2}\leq C\e^{C|\mu|}\left(\|\Delta f\|_{L^{2}(\Omega)}^{2}+\|g\|_{L^{2}(\Omega)}^{2}+\|\nabla u\|_{L^{2}(\omega)}^{2}\right).
\end{align}
Using \eqref{Splate4} and \eqref{Splate23} we follow
\begin{align*}
\|\Delta u\|_{L^{2}(\Omega)}^{2}\leq C\e^{C|\mu|}\Big(\|\Delta f\|_{L^{2}(\Omega)}^{2}+\|g\|_{L^{2}(\Omega)}^{2}+
\\
\left(\|\Delta f\|_{L^{2}(\Omega)}\!+\!\|g\|_{L^{2}(\Omega)}\right)\left(\| u\|_{L^2(\Omega)}+\| \nabla u\|_{L^2(\Omega)}\right)\Big).
\end{align*}
By Poincar\'e inegalit\'e, one has
\begin{equation}\label{Splate25}
\|\Delta u\|_{L^{2}(\Omega)}^{2}\leq C\e^{C|\mu|}\left(\|\Delta f\|_{L^{2}(\Omega)}^{2}+\|g\|_{L^{2}(\Omega)}^{2}
\right).
\end{equation}
We refer to the expression of $v$ in the first line of \eqref{Splate2} and using the fact that 
$$
\|u\|_{L^{2}(\Omega)}\leq C\|\Delta u\|_{L^{2}(\Omega)}
$$
then estimate \eqref{Splate25} gives
\begin{equation}\label{Splate26}
\|v\|_{L^{2}(\Omega)}^{2}\leq C\e^{C|\mu|}\left(\|\Delta f\|_{L^{2}(\Omega)}^{2}+\|g\|_{L^{2}(\Omega)}^{2}\right).
\end{equation}
So that, the estimate \eqref{Splate24} is obtained by the combination of the two estimates \eqref{Splate25} and \eqref{Splate26}. And this completes the proof.

\end{document}